\documentclass[a4paper]{amsart}
\newtheorem{lemma}{Lemma}
\newtheorem{theorem}[lemma]{Theorem}
\theoremstyle{remark}
\newtheorem{remark}[lemma]{Remark}
\usepackage{xcolor}
\usepackage[colorlinks, citecolor = blue]{hyperref}
\usepackage{amsmath, amsthm, amsfonts, mathrsfs, amssymb}
\usepackage{enumitem}

\usepackage{orcidlink}

\title{A solution to Crouzeix's conjecture}

\author[Lorist]{Emiel Lorist \orcidlink{0000-0002-2045-6035}}
\address{Emiel Lorist \hfill\break\indent
Delft Institute of Applied Mathematics \hfill\break\indent
Delft University of Technology \hfill\break\indent
P.O. Box 5031 \hfill\break\indent
2600 GA Delft, The Netherlands}
\email{e.lorist@tudelft.nl}

\author[Schwenninger]{Felix L.~Schwenninger \orcidlink{0000-0002-2030-6504}}
\address{Felix L.~Schwenninger\hfill\break\indent 
Department of Applied Mathematics\hfill\break\indent
University of Twente \hfill\break\indent
P.O. Box 217 \hfill\break\indent
7500 AE Enschede, The Netherlands}
\email{f.l.schwenninger@utwente.nl}

\thanks{E. Lorist was financed by the Dutch Research Council (NWO) on the project ``The sparse revolution for stochastic partial differential equations'' with project number \href{https://doi.org/10.61686/ZGRMR99948}{VI.Veni.242.057}.
}

\keywords{Crouzeix's conjecture, spectral set, numerical range}
\subjclass[2020]{47A25, 47A12, 15A60}

\begin{document}

\begin{abstract}
We provide a proof of Crouzeix's conjecture, which combines the tools developed previously for weaker estimates with a simple perturbation lemma for $2$-dilations. %The key point is that the lemma uses the uniformly bounded errors at all powers simultaneously, rather than relying only on the first-order relation.
Applying the lemma to the iterates $f^{n}$ in the double-layer potential representation yields the conjectured bound.
\end{abstract}

\maketitle
\section{Introduction}
In 2004, Crouzeix \cite{Crouzeix2004} conjectured that for any dimension $d\in\mathbb{N}$, any linear operator $A$ on $H=\mathbb{C}^{d}$ and any polynomial $p$ with complex coefficients,  
\begin{equation}\label{eq0}\|p(A)\|\leq 2\max_{z\in W(A)}|p(z)|,
\end{equation}
where $W(A)=\{\langle Ax,x\rangle\colon x\in H,\|x\|=1\}$ denotes the numerical range of $A$ and $\|\cdot\|$ denotes the operator norm induced by the Euclidean norm. Subsequently Crouzeix \cite{Crouzeix2007} showed  that  \eqref{eq0} holds if $2$ is replaced by $11.08$. Ten years later, Crouzeix and Palencia \cite{CrouzeixPalencia} showed that the constant can be lowered to $1+\sqrt{2}$. Later, it was shown in \cite{RansfordMalman} that $1+\sqrt{2}$ is never attained.  The conjecture has been shown for many special classes of matrices and operators in recent years \cite{bickelgorkin,CrouzeixGreenbaumLi,Glader,oloughlin2024crouzeix}. Let us mention that the case $d=2$ was already obtained in \cite{Crouzeix2004} and the case where the numerical range is a disk goes back to a classical result by Okubo and Ando on $2$-contractions \cite{OkuboAndo}.

The main technical tool used to derive results for general matrices (and operators) in most references above is the operator-valued  double-layer potential, which originates in the seminal work of  Delyon and Delyon \cite{DelyonDelyon}. For the main features of this technique, we refer to \cite{SchwenningerdeVries2}. The use of the double-layer potential  was refined in \cite{CrouzeixPalencia}, which also initiated a research program to abstract the function-theoretic ``hard analysis'' into operator-theoretic ``soft analysis'' %, as present in several areas of complex analysis, 
\cite{Bickeletal,ClouatreOstermannRansford,RansfordSchwenninger,SchwenningerdeVries1}. 
This note continues and completes this abstract approach by supplying the final piece of the puzzle.

We note in passing the efforts to use more ``hard analysis'' insights to resolve the conjecture, including the use of Blaschke products $B$, which after composition with a conformal mapping, realize the (unknown) optimal constant in \eqref{eq0}, see \cite{Bickeletal,CaldwellGreenbaum,ClouatreOstermannRansford}. In particular, we mention the more recent proof of the $d=2$ case in \cite{Mashreghietal}. 
\smallskip

 During the preparation of this manuscript, a proof of Crouzeix's conjecture appeared independently in \cite{Jin} using an approach involving function-theoretic representations of matrix-valued functions.
 Our proof instead uses the operator-theoretic Lemma~\ref{lem1}. 
\smallskip

In the rest of this note, all Hilbert spaces $H$ are complex, inner products are linear in the first variable, and $\mathcal{L}(H)$ denotes the  space of bounded linear operators on $H$.

\section{Main result}
The following result deals with perturbations of $2$-dilations with respect to a contraction. Indeed, if the perturbations are uniformly bounded over the powers of the operator and a commutativity condition holds, then the operator norm cannot exceed $2$. 

\begin{lemma}\label{lem1}
Let $H$ be a Hilbert space and $T\in \mathcal{L}(H)$. Suppose that there exists a Hilbert space $K$, a contraction $Q\in\mathcal{L}(K)$  and an isometry $V\in\mathcal{L}(H,K)$ such that the operators 
\begin{equation}\label{eq1}
E_{n}:=2V^*Q^{*n}V-T^{*n}, \qquad n\in \mathbb{N},
\end{equation}
are uniformly bounded and commute with $T$. Then $\|T\|\leq 2$.
\end{lemma}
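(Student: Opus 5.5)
The plan is to reduce to an Okubo--Ando type positivity statement and then kill the error terms $E_n$ by averaging over many powers, so that a uniform bound on the $E_n$ becomes negligible after normalisation. The starting point is the exact identity behind Okubo--Ando. Since $Q$ is a contraction, for $|z|<1$ the operator $(I+zQ^*)(I-zQ^*)^{-1}=2\sum_{n\ge0}z^nQ^{*n}-I$ has positive real part. Compressing by the isometry $V$ and inserting \eqref{eq1} (with $E_0:=2V^*V-I=I$) gives
\[
\operatorname{Re}\Bigl(\sum_{n\ge0} z^n T^{*n}+\sum_{n\ge0}z^nE_n-I\Bigr)\ \ge 0,\qquad |z|<1 .
\]
Without errors this says $\operatorname{Re}\bigl((I-zT^*)^{-1}\bigr)\ge \tfrac12$, which is equivalent to $w(T)\le1$ and hence to $\|T\|\le 2$. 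The whole proof therefore consists in showing that the error series does not spoil this conclusion.

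Uniform boundedness alone is not enough here: $\sum z^nE_n$ can be of size $C/(1-|z|)$, which blows up exactly where the information is needed. So I would not use the resolvent at a single $z$. Instead I would use a finite positive-definite (Fejér) version. For $N\in\mathbb N$, $\theta\in\mathbb R$ and any $x\in H$, positivity of the Fejér kernel applied to the contraction $Q$ (via its unitary dilation) gives
\[
\sum_{|n|<N}\Bigl(1-\tfrac{|n|}{N}\Bigr)e^{in\theta}\langle Q^{*(n)}Vx,Vx\rangle\ge0,
\]
where $Q^{*(n)}$ means $Q^{|n|}$ for $n<0$. Substituting \eqref{eq1} turns this into the same Fejér sum for $T$, with $T^{*n}$ and $T^{|n|}$ replacing the $Q$-powers, plus an error $\sum(1-|n|/N)e^{in\theta}\langle E_nx,x\rangle$.

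The commutation hypothesis enters here. I would apply the inequality not to $x$ but to the vectors $x_k=T^kx$ and sum over $k=0,\dots,M-1$. Because $E_n$ commutes with $T$, the error terms reorganise into $\langle E_n x, T^{*k}T^kx\rangle$-type expressions. These are controlled by $\sup_n\|E_n\|$ times norms that grow at most like the main terms. After dividing by $MN$, the main terms should reproduce a Cesàro-averaged form of $w(T)\le1$ (equivalently of $\|T\|\le2$), while the errors are $O(1/N)$ relative to them. The sequence $T^n$ is itself power bounded, since $\|T^{*n}\|\le 2+\sup\|E_n\|$, and this ensures that the main terms do not degenerate.

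The main obstacle is the last step: extracting the sharp constant $2$, and not merely power boundedness, from the averaged inequality. Concretely, one must arrange the test vectors (the choice of $x$, of the $T^kx$, and of $\theta$) so that the averaged positivity inequality isolates $\|Tx\|^2\le 4\|x\|^2$, or rather $|\langle Tx,x\rangle|\le\|x\|^2$, in the limit. At the same time the error contribution must be shown to be $o(N)$ by a genuine commutation argument rather than by a crude norm bound. If this does not work directly, I would fall back on first proving $w(T)\le 1$ via the spectral radius formula. The idea there is to apply the averaged estimate to $T$ and to all its powers $T^m$, which satisfy the same hypotheses with $Q^m$ and errors $E_{nm}$. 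One then uses that the bounds obtained for $T^m$ are uniform in $m$, takes $m$-th roots, and concludes with Berger's power inequality $w(T^m)\le w(T)^m$ together with $\|T\|\le 2w(T)$.
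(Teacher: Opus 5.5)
There is a genuine gap, and it lies in the target you aim for rather than in the technicalities. As you note yourself, the decisive step is not supplied, and it cannot be supplied in the form you intend. Your strategy is to show that the $E_n$ are negligible, so that the error-free conclusion $w(T)\le 1$ survives. But $w(T)\le 1$ is false under the hypotheses of the lemma.

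Here is a counterexample. Take $A=2e_{12}$ on $\mathbb{C}^2$, so $w(A)=1$. By Berger's theorem, $A^n=2P_HU^n|_H$ for $n\ge1$, for some unitary $U$ on a space $K\supseteq H$. Let $f(z)=(z-\tfrac12)/(1-\tfrac z2)$, and put $Q:=f(U)$ (unitary), $V$ the inclusion, and $T:=f(A)=-\tfrac12 I+\tfrac32 e_{12}$. Then $2V^*Q^{*n}V=T^{*n}+(-\tfrac12)^nI$, so the hypotheses hold with $E_n=(-\tfrac12)^nI$, which are bounded and commute with $T$. Yet $W(T)$ is the disc with centre $-\tfrac12$ and radius $\tfrac34$, so $w(T)=\tfrac54>1$ (while $\|T\|\approx 1.65$).

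Consequences for your plan:
\begin{itemize}
\item No scheme in which the main terms ``reproduce a Ces\`aro-averaged form of $w(T)\le1$'' with lower-order errors can succeed.
\item In this example $r(T)=\tfrac12$, so the Fej\'er sums of the main terms and of the error terms along $T^kx$ are both $O(1)$. Dividing by $MN$ simply kills everything. Averaging over powers only sees the peripheral spectrum, and $\|T\|$ is not determined by it.
\item The fallback fails too. Uniform bounds for $T^m$ followed by $m$-th roots give at most $r(T)\le 1$, which power boundedness already yields. Berger's inequality $w(T^m)\le w(T)^m$ points the wrong way for bounding $w(T)$ from above.
\end{itemize}
Two smaller slips: the error-free inequality reads $\Re(I-zT^*)^{-1}\ge 0$, since $\ge\tfrac12$ characterizes $\|T\|\le1$; and $w(T)\le 1$ only implies $\|T\|\le 2$, it is not equivalent to it.

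The missing idea is to target $\|T\|$ directly through a norming vector, and to use the errors rather than discard them. The paper proceeds as follows.
\begin{itemize}
\item Take $\kappa=\|T\|>1$, a unit vector $x$ with $T^*Tx=\kappa^2x$, and set $m_n=\Re\langle E_nT^nx,x\rangle$.
\item Using commutativity, write $\kappa m_n-m_{n+1}=\kappa(\kappa-1)\|T^{*n}x\|^2-2\Re\langle V^*Q^{*n}u,T^{*n}x\rangle$ with $u=Q^*VTx-\kappa Vx$. Completing the square shows this is $\ge -\|u\|^2/(\kappa(\kappa-1))$.
\item Boundedness of the $E_n$ is used only to make the weighted telescoping sum $\sum_{n\ge1}\kappa^{-n+1}(\kappa m_n-m_{n+1})$ converge to $\kappa m_1$. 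This gives $\kappa m_1\ge -\|u\|^2/(\kappa-1)^2$.
\item Contractivity of $Q$ gives $\|u\|^2\le 2\kappa^2-\kappa m_1-\kappa^3$. The two bounds are incompatible once $\kappa>2$.
\end{itemize}
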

\begin{proof} Define $\kappa:=\|T\|$. If $\kappa \leq 1$, there is nothing to prove, so suppose that $\kappa>1$. 
Without loss of generality we may assume that there exists an $x \in H$ such that $\|x\|=1$ and $T^{*}Tx=\kappa^{2}x$. This is clear when $H$ is finite-dimensional. For infinite-dimensional $H$, we can simultaneously pass to an ultrapower of $H$ and $K$, which preserves all assumptions of the lemma and the norm of $T$ and makes the operator $T^*T$ norm-attaining.

 Set $m_{n}:=\Re\langle E_{n}T^{n}x,x\rangle$. The powers $T^{n}$ are uniformly bounded by \eqref{eq1} and the uniform boundedness of the $E_n$'s, hence the sequence $(m_n)_{n\geq 1}$ is bounded. Moreover, since $T$ commutes with the $E_{n}$'s, we have for $n \in \mathbb{N}$
\begin{align*}
\kappa m_n-m_{n+1}
&=\Re\langle T^n(\kappa E_n-E_{n+1}T)x,x\rangle\\
&=\Re\langle T^n\bigl(\kappa(\kappa-1)T^{*n}x
   +2V^*Q^{*n}(\kappa Vx-Q^*VTx)\bigr),x\rangle,\\
   &=\kappa(\kappa-1)\|T^{*n}x\|^{2}+2 \Re\langle V^* Q^{*n}(\kappa Vx-Q^*VTx),T^{*n}x\rangle.
\end{align*}
Thus, defining the error vector $u:= Q^*VTx-\kappa Vx,$ we have
\begin{align*}
\kappa m_{n}-m_{n+1}={}&\kappa(\kappa-1)\|T^{*n}x\|^{2}-2\,\Re\langle V^*Q^{*n}u,T^{*n}x\rangle\\
={}&\kappa(\kappa-1)\|T^{*n}x-\tfrac{1}{\kappa(\kappa-1)}V^{*}Q^{*n}u\|^{2}-\frac{\|V^{*}Q^{*n}u\|^{2}}{\kappa(\kappa-1)}\geq -\frac{\|u\|^{2}}{\kappa(\kappa-1)}.%\\
%\geq{}& \kappa(\kappa-1) \|T^{*n}x\|^{2}-2\,\|u\|\|T^{*n}x\|\\
%={}&\kappa(\kappa-1) \Bigl( \|T^{*n}x\|-\frac{\|u\|}{\kappa(\kappa-1)}\Bigr)^2 -\frac{\|u\|^{2}}{\kappa(\kappa-1)}\geq -\frac{\|u\|^{2}}{\kappa(\kappa-1)}.
\end{align*}
Therefore, by a telescoping argument we have
\begin{equation}\label{eq2}
\kappa m_1=\sum_{n=1}^{\infty}\kappa^{-n+1}(\kappa m_{n}-m_{n+1}) \geq -\frac{\|u\|^{2}}{(\kappa-1)^2}.
\end{equation}
%Moreover, $\|u+\kappa Vx\|^2= \|Q^*VTx\|^2 \leq \kappa^2$, so $2\Re\langle{u,Vx}\rangle \leq -\frac{\|u\|^2}{\kappa}$ and therefore
%\begin{align}\label{eq:upper}\begin{aligned}
 %   m_1 &= 2\Re\langle Q^*VTx,Vx\rangle-\kappa^2 \\&= 2\Re\langle u,Vx \rangle +2\kappa-\kappa^2  
 %   \leq \kappa(2-\kappa)-\frac{\|u\|^2}{\kappa}.
 %   \end{aligned}
%\end{align}
Moreover, using again that $V$ is an isometry and $Q$ a contraction, 
 \begin{align}\begin{aligned}\|u\|^{2}={}&\|Q^{*}VTx\|^{2}+\kappa^{2}-2\kappa\Re\langle V^{*}Q^{*}VTx,x\rangle\\
 \leq{}& 2\kappa^{2}-\kappa\Re\langle E_{1}Tx,x\rangle -\kappa^{3}=2\kappa^{2}-\kappa m_{1}-\kappa^{3}\end{aligned}\label{eq4}
 \end{align}
Combined with \eqref{eq2}, we find
\begin{equation*}
   \|u\|^2\bigl(1-\tfrac{1}{(\kappa-1)^2}\bigr) \leq \kappa^2(2-\kappa),
\end{equation*}
which is impossible if $\kappa>2$. Therefore, $\kappa\leq 2$.
\end{proof}

\begin{remark}\label{rem1}~
\begin{enumerate}[label=(\roman*)] 
    \item \label{rem1it1}
The upper bound for $m_1=\Re\langle E_1Tx,x\rangle$
in \eqref{eq4} implies that
\[
\kappa\leq 1+\sqrt{1-\Re\langle E_1Tx,x\rangle}.
\]
Thus, if $\Re\langle E_1Tx,x\rangle\geq 0$, the assertion of Lemma \ref{lem1} follows directly. However, this positivity condition does not hold in general; see Section \ref{sec:epi}\ref{it:extremal}. The key new idea in the proof of the lemma is the lower bound in \eqref{eq2}, obtained by exploiting the identities \eqref{eq1} simultaneously for all $n\geq 1$. Combined with the upper bound \eqref{eq4}, it yields the conclusion of Lemma \ref{lem1} even when $\Re\langle E_1Tx,x\rangle<0$.
\item \label{rem1it2} If the dilation factor $2$ is replaced by a general $\rho>0$ in \eqref{eq1}, an analogous computation yields the bound 
\[\|T\|\leq \max\left\{\rho,1+\sqrt{\tfrac{\rho}{2}}\right\}.\]
\end{enumerate}
 \end{remark}
The following result shows that Crouzeix's conjecture is true. For the reader's convenience, we present a proof explicitly stating the machinery used in earlier works. This can be  shortened by an abstract argument, see Section \ref{sec:epi}.

\begin{theorem}[Crouzeix's conjecture]\label{thm}
Let $A$ be a bounded operator on a Hilbert space $H$. Then the numerical range $W(A)$  is a $2$-spectral set, i.e.\ 
\begin{equation}
\label{eq5}
\|f(A)\|\leq 2\sup_{z\in W(A)}|f(z)|
\end{equation}
for all rational functions $f$ with poles off the closure of $W(A)$.
\end{theorem}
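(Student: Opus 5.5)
We will apply Lemma~\ref{lem1} to $T=f(A)$, normalized so that $\sup_{W(A)}|f|=1$, using the Crouzeix--Palencia double-layer potential machinery. By standard reductions (approximation and translation), we may assume $W(A)$ is contained in the interior of a bounded convex domain $\Omega$ with smooth boundary and $\sigma(A)\subset W(A)\subset\Omega$. We also assume that $f$ is holomorphic on a neighbourhood of $\overline{\Omega}$ with $\sup_{\overline\Omega}|f|\le 1$; letting $\Omega$ shrink to $W(A)$ then recovers \eqref{eq5}. For $g$ continuous on $\partial\Omega$, let $S(g):=\frac{1}{2\pi i}\int_{\partial\Omega} \overline{g(\sigma)}\,(\sigma-A)^{-1}d\sigma$, the Cauchy transform of $\bar g$. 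Let $\mu(\sigma,A)$ be the operator-valued double-layer kernel. It is a positive operator by convexity of $\Omega$ and $W(A)\subset\Omega$, and it has total mass $2\cdot\mathrm{Id}$ when integrated against arc length over $\partial\Omega$. The Crouzeix--Palencia identity reads
\[
g(A)+S(g)^*=\int_{\partial\Omega} g(\sigma)\,\mu(\sigma,A)\,ds(\sigma).
\]

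First we build the dilation. Since $\sigma\mapsto\mu(\sigma,A)\,ds$ is a positive operator-valued measure with total mass $2\,\mathrm{Id}$, Naimark's theorem provides a Hilbert space $K$ and an isometry $V\colon H\to K$. It also provides a spectral measure on $\partial\Omega$, for instance multiplication on an $L^2$-type space, such that $\int g\,\mu\,ds = 2V^*\Phi(g)V$, where $\Phi$ is the $*$-representation of $C(\partial\Omega)$ on $K$. Set $Q:=\Phi(\bar f)$, so that $Q^*=\Phi(f)$. Since $|f|\le 1$ on $\partial\Omega$, $Q$ is a contraction, in fact a normal one, and $Q^{*n}=\Phi(f^n)$. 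Applying the identity to $g=f^n$ gives
\[
2V^*Q^{*n}V=f(A)^n+S(f^n)^*.
\]
Setting $T:=f(A)^*$, we have $T^{*n}=f(A)^n$, and therefore $E_n=S(f^n)^*$.

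It remains to verify the two hypotheses of Lemma~\ref{lem1}. For commutativity, $S(f^n)$ is an integral of resolvents of $A$, so it commutes with $A$ and hence with $f(A)$. Taking adjoints, $E_n=S(f^n)^*$ commutes with $f(A)^*=T$. For uniform boundedness, we need $\sup_n\|S(f^n)\|<\infty$. This is the known key estimate of Crouzeix--Palencia: for convex $\Omega\supset W(A)$ and $\|g\|_{\infty,\partial\Omega}\le1$, one has $\|S(g)\|\le 1$. It follows, for example, from the positivity argument: for $g$ holomorphic with $\|g\|_\infty\le 1$ the Cauchy transform of $\bar g$ reduces to a constant-type term controlled by $\|g\|_\infty$. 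The clean route is their lemma $\|S(g)\|\le\|g\|_\infty$, valid for $g$ in the disc algebra of $\Omega$. Applied to $g=f^n$, which satisfies $\|f^n\|_\infty\le1$, this gives $\|E_n\|\le 1$ for all $n$.

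Lemma~\ref{lem1} then yields $\|f(A)\|=\|T\|\le 2$, and a limiting argument $\Omega\downarrow W(A)$ finishes the proof. The main obstacle is making the uniform bound on $S(f^n)$ rigorous in the required generality, including the smooth-boundary approximation and, in infinite dimensions, the case where $A$ is not bounded away from $\partial W(A)$. I would first reduce to $\sigma(A)$ and $W(A)$ strictly inside a smooth convex $\Omega$ by replacing $A$ with $A$ acting against a slightly enlarged domain. I would then cite the Crouzeix--Palencia bound $\|S(g)\|\le\|g\|_\infty$ directly. Only a uniform constant is needed, not the sharp one, which leaves margin for this step.
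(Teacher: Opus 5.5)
Your argument is the paper's proof up to taking adjoints. The paper applies Lemma~\ref{lem1} to $T=f(A)$ with $Q$ the multiplication by $f$, so that $E_n=\alpha(f^n)(A)$. You take $T=f(A)^*$ and $Q$ the multiplication by $\bar f$, so that $E_n=S(f^n)^*$ with $S(g)=\alpha(g)(A)$. The dilation is also the same: your Naimark construction corresponds to the paper's explicit $Vx=\frac{1}{\sqrt2}P_\Omega(\cdot)^{1/2}x$ into $L^2(\partial\Omega;H)$. The double-layer identity and the commutativity check match as well.

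The one step that is not correctly justified is the uniform bound on $E_n$. The Crouzeix--Palencia lemma concerns the scalar function $\alpha(g)$: for convex $\Omega$, $\sup_{\Omega}|\alpha(g)|\le\sup_{\Omega}|g|$. It does not give the operator bound $\|S(g)\|=\|\alpha(g)(A)\|\le\|g\|_\infty$. Passing from the function to the operator needs a bound on the functional calculus, which is exactly the kind of estimate under study. Your remark that the Cauchy transform of $\bar g$ ``reduces to a constant-type term'' holds only when $\Omega$ is a disc; then $\alpha(g)\equiv\overline{g(c)}$ with $c$ the centre.

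No such input is needed, because only $\sup_n\|E_n\|<\infty$ is required. Since $\sigma(A)\subset\overline{W(A)}\subset\Omega$, the resolvent is continuous on the compact curve $\partial\Omega$. Directly from your definition, for all $n$,
\[
\|S(f^n)\|\le\frac{1}{2\pi}\int_{\partial\Omega}\|(\sigma-A)^{-1}\|\,|d\sigma|\cdot\|f^n\|_{\infty,\partial\Omega}\le\frac{1}{2\pi}\int_{\partial\Omega}\|(\sigma-A)^{-1}\|\,|d\sigma|<\infty .
\]
The paper uses the bound $\|E_n\|\le\|\theta\|\|\alpha\|$ in the same spirit. It explicitly notes that contractivity of $\alpha$ is not used.

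With this replacement your proof is complete. Your worry about $A$ not being bounded away from $\partial W(A)$ is moot once $\Omega\supset\overline{W(A)}$, and Lemma~\ref{lem1} itself already covers infinite-dimensional $H$.
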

\begin{proof}
It suffices to consider the case $H=\mathbb{C}^{d}$ for arbitrary $d\in\mathbb{N}$ and to replace $W(A)$ in the statement by any smoothly bounded, open convex set $\Omega$ containing $\overline{W(A)}$;  see \cite{SchwenningerdeVries1,SchwenningerdeVries2}.  We recall the relevant ingredients of the standard setup developed in \cite{Crouzeix2007,CrouzeixPalencia,DelyonDelyon}. See \cite{SchwenningerdeVries2} for details of the facts stated in the following paragraph. 

As the central object, we consider the double-layer potential
$P_{\Omega}:\partial\Omega\to\mathcal{L}(H)$ given by $$P_{\Omega}(\sigma):=\tfrac{1}{\pi}\Re\left(n_{\Omega}(\sigma)(\sigma I-A)^{-1}\right)$$ with $n_{\Omega}(\sigma)$ being  the outward unit normal to $\partial\Omega$ at $\sigma$. Let $\mathcal{A}(\Omega)$ denote the analytic functions $f:\Omega\to\mathbb{C}$ which extend continuously to the closure of $\Omega$ and define the mapping $\Phi:\mathcal{A}(\Omega)\to \mathcal{L}(H)$ by
\begin{equation}\label{eqthm1}
\Phi(f):=\frac{1}{2}\int_{\partial\Omega}f(\sigma)P_{\Omega}(\sigma)\left|\mathrm{d}\sigma\right|,\qquad f\in \mathcal{A}(\Omega).
\end{equation} 
Since $W(A)\subset\Omega$, it readily follows that $P_{\Omega}(\sigma)\ge0$ for every $\sigma\in\partial\Omega$ and, by the Cauchy integral theorem, $\Phi(1)=I$. It is also known that 
there exists a bounded, antilinear mapping $\alpha:\mathcal{A}(\Omega)\to\mathcal{A}(\Omega)$ such that  $$2\Phi(f)-f(A)=\alpha(f)(A)^{*}.$$ Indeed, although its precise form is irrelevant here, $\alpha(f)$ is the Cauchy transform of $\bar{f}$, i.e. $$\alpha(f)=\frac{1}{2\pi i}\int_{\partial\Omega} \overline{f(\sigma)}(\sigma-\cdot)^{-1}\mathrm{d}\sigma.$$ 

Let $K=L^{2}(\partial \Omega,|\mathrm{d}\sigma|; H)$ and let $V:H\to K$ be given by $Vx:= \frac{1}{\sqrt{2}}P_{\Omega}(\cdot)^{\frac{1}{2}}x$, which defines an isometry as $\Phi(1)=I$.  Fix $f\in \mathcal{A}(\Omega)$ with supremum norm $\|f\|_\infty=1$. Then $Q:K\to K$ given by $Q(g)=f\cdot g$ is contractive and 
\[V^{*}Q^{n}V=\frac{1}{2}\int_{\partial\Omega}f^{n}(\sigma)P_{\Omega}(\sigma)\,|\mathrm{d}\sigma|=\Phi(f^n), \qquad n\in \mathbb{N}.\]
By the above facts, 
\begin{equation}\label{eq:importantDL}
E_{n}:=2V^{*}Q^{*n}V-T^{*n}= 2\Phi(f^{n})^{*}-f(A)^{*n}=\alpha(f^{n})(A),
\end{equation}
whence it follows that $T:=f(A)$ commutes with $E_{n}$ 
%(A)=(f^{n}\alpha(f^{n}))(A)=\alpha(f^{n})(A)T$ 
by the homomorphism property of the holomorphic functional calculus $\theta:\mathcal{A}(\Omega)\to\mathcal{L}(H)$ given by $g\mapsto g(A)$. Finally, note that $$\|E_{n}\|=\|\alpha(f^{n})(A)\|\leq \|\theta\|\|\alpha\|\|f^{n}\|_\infty= \|\theta\|\|\alpha\|<\infty$$ for all $n\in\mathbb{N}$. Lemma \ref{lem1} now gives $\|T\|=\|f(A)\|\leq 2$.
% which coincides with the singular double-layer potential (defined on $C(\partial\Omega)$ on $\mathcal{A}(\Omega)$,
\end{proof}
\pagebreak[2]

\section{Remarks and complements}\label{sec:epi}
In this section, we collect several observations on Theorem \ref{thm} and its proof, and place the underlying ideas and techniques in their historical context.

\begin{enumerate}[label=(\roman*)]\setlength\itemsep{0.5em}
\item \textbf{Comparison of the ingredients used with earlier approaches.}\\ 
The proof of Theorem \ref{thm} does not use that $\alpha$ is contractive (as a consequence of the convexity of the domain), which was the key new ingredient used in Crouzeix--Palencia's  improvement \cite{CrouzeixPalencia} of the old estimate as well as in the alternative proof in \cite{RansfordSchwenninger}, see also \cite{ClouatreOstermannRansford,SchwenningerdeVries1}. In some sense, nearly all facts used in the  proof of Theorem \ref{thm} were already present in Crouzeix's original proof \cite{Crouzeix2007}, with the minor exception that he did not explicitly state that $2\Phi(f)-f(A)$ is given by $\alpha(f)(A)^{*}$. This identity is needed to show the commutativity assumption in Lemma \ref{lem1}. However, he did prove the estimate 
$\|2\Phi(f)-f(A)\|\leq 9.08\,\|f\|_\infty,$ which implies $\sup_{n\in\mathbb{N}}\|E_{n}\|<\infty$. Additionally, the use of $\Phi(1)=I$ to define the isometry $V$ is new.

\item \label{it:extremal}\textbf{Extremal functions and vectors.}\\
The central object in the proof of Lemma \ref{lem1}, translated to the proof in Theorem \ref{thm}, is 
\[m_{1}=\Re\langle E_{1}Tx,x\rangle=\Re\langle f(A)\alpha(f)(A)x,x\rangle=\Re\langle (f\alpha(f))(A)x,x\rangle,\]
which has already played a central role in previous approaches. Indeed, for an extremal pair
$(f,x)\in\mathcal{A}(\Omega)\times H$, that is, unit-norm elements $f$ and $x$  such that
$\|f(A)x\|=\sup_{h\in\mathcal{A}(\Omega),\|h\|_\infty\leq1}\|h(A)\|$,
this expression was studied in \cite{Bickeletal,SchwenningerdeVries1}. In \cite{SchwenningerdeVries1} it was shown that $|\langle (f\alpha(f))(A)x,x\rangle|\leq 1$ and thus $m_1\geq -1$. Using Remark \ref{rem1}\ref{rem1it1}, this yields the constant  $1+\sqrt{2}$.
Afterwards, in \cite{SchwenningerdeVries2} it was shown that $m_{1}\geq0$ implies that Crouzeix's conjecture is true, which is essentially Remark \ref{rem1}\ref{rem1it1}. This was also used in \cite{CrouzeixGreenbaumShift} to reprove Choi's result \cite{Choi} for weighted shift matrices.

Recently, it was shown in \cite{Mashreghietal} that $m_{1}\geq0$ holds in dimension $d=2$ if $\Omega = W(A)$ (strictly speaking, one has to replace $W(A)$ by its interior). However, examples of $3\times 3$ matrices can be constructed such that $m_{1}$ is strictly negative. Furthermore, for $d=2$ such examples  can also be constructed if the set $\Omega$	is chosen strictly larger than $W(A)$.
Thus, the fact that Lemma \ref{lem1} allows $m_{1}<0$ is not just an artifact of our proof. 

We also remark that our proof does not exploit any specific properties of the extremal functions or vectors \cite{Bickeletal,CaldwellGreenbaum}, or the related extremal measures \cite{SchwenningerdeVries1}.

\item \textbf{The refinement from $n=1$ to all $n\geq 1$.}\\
The identity \eqref{eq1} for $n=1$, combined with $E_1 = \alpha(f)(A)$ and the contractivity of $\alpha$, see \cite{CrouzeixPalencia,SchwenningerdeVries2},  also implies the weaker spectral constant $1+\sqrt{2}$. Indeed, as in \cite{RansfordSchwenninger}, \eqref{eq1} with $n=1$ gives 
\begin{equation*}
T^{*}TT^{*}T=T^{*}T\left(2V^{*}Q^* V\right)T-T^{*}TE_{1}T.
\end{equation*}
For an extremal $f$, with $T$, $E_{1}$ and $\kappa$ as in the proof of Theorem \ref{thm}, the first term on the right-hand side has norm at most $2\kappa^{3}$ and the second term satisfies
 \[\|T^{*}TE_{1}T\|\leq \|f(A)\|\|(f\alpha(f)f)(A)\|\leq \kappa \|\theta\|\|f\|_\infty^{2}\|\alpha(f)\|_\infty \leq \kappa^2.\]
This yields $\kappa^{4}\leq 2\kappa^{3}+\kappa^{2}$, and hence $\kappa\leq1+\sqrt{2}$. 

The improvement from $1+\sqrt{2}$ to $2$ in our argument comes from leveraging the identities \eqref{eq1} simultaneously for all $n\geq 1$, and hence from combining the algebraic structure of the functional calculus with the norm estimates in a more refined way. A related phenomenon occurs in Esterle's proof of the zero-two law for cosine functions \cite[Section 3]{Esterle}: there, an algebraic identity for cosine functions is combined with a refined norm inequality to upgrade a non-sharp estimate to the sharp bound. This argument may be viewed as a loose inspiration for the proof of Lemma \ref{lem1}.

\item \label{abstractlemma}\textbf{Extensions and limitations}\\
The proof of Theorem \ref{thm} can be adapted to show the following general abstract result, which settles a variant of  \cite[Question 4.1]{RansfordSchwenninger}, see also \cite[Conjecture 1.2 and Theorem 1.3]{ClouatreOstermannRansford}: 
    \medskip
    \begin{quote}
        Let $\mathcal{A}$ be a uniform algebra, $\alpha:\mathcal{A}\to\mathcal{A}$ a unital, antilinear, bounded map and $\theta:\mathcal{A}\to \mathcal{L}(H)$ a unital, bounded homomorphism. If $\Phi=\frac{1}{2}\left(\theta(\cdot)+\theta(\alpha(\cdot))^{*}\right)$ is completely positive, then $\|\theta\|\leq2$.
    \end{quote}
    \medskip
    To see this, one uses Arveson's extension and Stinespring's dilation theorem, see \cite{Paulsen}, to extend $\Phi$ to a $C^{*}$-algebra $\mathcal{C}$ and to deduce the existence of a Hilbert space $K$, an isometry $V:H\to K$ and a $*$-homomorphism $\pi:\mathcal{C}\to \mathcal{L}(K)$ such that 
    \[\Phi(f)=V^{*}\pi(f) V.\]
    For $f \in \mathcal{A}$ with $\|{f}\|=1$ set $T=\theta(f)$ and $Q=\pi(f)$, which settles the claim as in the proof of Theorem  \ref{thm}. Note that the $\Phi$ in the proof of Theorem \ref{thm} indeed extends to a positive, and thus completely positive, map on the commutative $C^{*}$-algebra $C(\partial\Omega)$ of the continuous functions on $\partial\Omega$.

    We point out that neither our proof nor Jin's proof \cite{Jin} can be used in a direct way to derive the completely bounded case of Crouzeix's conjecture. For this version of the conjecture and related recent results, see \cite{ClouatreOstermannRansford,HartzMcCarthy}. In our argument, the obstruction is reflected by the commutativity assumption in Lemma \ref{lem1}, which is not available after matrix amplification.
    \item \textbf{Extensions to other spectral sets}\\
    The abstract result from \ref{abstractlemma} above immediately implies the optimal spectral constant $2$ for the quantum annulus, see \cite{CrouzeixAnnulus,CrouzeixGreenbaum,Jin,JuryTsikalas,Pascoe,Tsikalas2022}.
    The generalization of Lemma \ref{lem1} described in Remark \ref{rem1}\ref{rem1it2} can be used to derive spectral constants for other (possibly non-convex) sets other than the numerical range, see  \cite{CrouzeixGreenbaum}. 
\end{enumerate}

\subsection*{AI disclosure statement}
   GPT-5.6 Sol Pro, accessed through ChatGPT by OpenAI, was used to explore proof strategies for this note. More precisely, it was used on July 25 and 26, 2026\footnote{The authors became aware of \cite{Jin} on July 28, 2026, after the interactions described above.}, to review previous approaches to the weaker spectral constant $1+\sqrt{2}$ and to identify a possible source of improvement in estimates involving iterates $f^{n}$ of extremal or approximately extremal functions. A key observation emerging from these interactions was that the discrepancy term
\[\bigl\|\Phi(f^{n+1})^{*}f(A)x-\|f(A)\|\Phi(f^{n})^{*}x\bigr\|,\]
arising from the double-layer-potential identity \eqref{eq:importantDL}, 
can be controlled uniformly in $n$ by the same quantity that naturally arises when estimating the previously studied remainder term $\Re\langle( f\alpha(f))(A)x,x\rangle$ for unit-norm $f$ and extremal $x$. 
This yielded a preliminary result, from which Lemma \ref{lem1} was subsequently abstracted. The proof in its final form was developed in full, verified, and written by the authors, who take full responsibility for all claims and arguments in the paper.

\section*{Acknowledgments}
The authors thank Jochen Gl\"uck and Michael~Hartz for valuable discussions.

%\bibliographystyle{abbrv}
%\bibliography{crouzeix}
\end{document}